\newtheorem{theorem}{Theorem}[section]
\newtheorem{lemma}[theorem]{Lemma}
\newtheorem{problem}[theorem]{Problem}
\theoremstyle{definition}
\newcommand{\nn}{\mathbb{N}}
\newcommand{\ee}{\varepsilon}
\numberwithin{equation}{section}
\begin{document}

\title[On injective tensor powers of $\ell_1$]{On injective tensor powers of $\ell_1$}

\author{R. M. Causey}
\address{Miami University, Department of Mathematics, Oxford, OH 45056, USA}
\email{causeyrm@miamioh.edu}

\author{E. M. Galego}
\address{University of S\~ao Paulo, Department of Mathematics, IME, Rua do Mat\~ao 1010,  S\~ao Paulo, Brazil}
\curraddr{Department of Mathematics and Statistics,}
\email{eloi@ime.usp.br}

\author{C. Samuel}
\address{Aix Marseille Universit\'e, CNRS, Centrale Marseille, I2M, Marseille, France}
\email{christian.samuel@univ-amu.fr}


\subjclass[2010]{Primary 46B03; Secondary 46B28}


\keywords{$3$-fold injective  tensor product of $\ell_1$, $3$-fold projective tensor product of $C(K)$ spaces}

\begin{abstract} In this paper we prove that the $3$-fold injective  tensor product $\ell_1 \widehat{\otimes}_\ee \ell_1 \widehat{\otimes}_\ee \ell_1 $     is not isomorphic to  any subspace of $\ell_1 \widehat{\otimes}_\ee \ell_1$.  This result provides a new solution to  a problem of Diestel  on the projective tensor products of $c_0.$ Moreover, this result implies that  for any infinite countable compact space $K,$  the $3$-fold projective tensor product $C(K) \widehat{\otimes}_\pi   C(K)\widehat{\otimes}_\pi C(K)$  is not isomorphic to any quotient of  $C(K) \widehat{\otimes}_\pi C(K)$.

\end{abstract}

\maketitle


\section{Introduction}
For standard Banach space terminology employed throughout the paper the reader is referred to \cite{JL} and \cite{Ry}. For $n\in\nn$, a tensor norm $\alpha$, and a Banach space $X$, let  $\widehat{\otimes}_\alpha^n X$ denote the $n$-fold $\alpha$-tensor product of $X$ with itself.

Very recently the authors solve a problem attributed to Diestel \cite[Theorem 1.3]{CGS} by proving that $\widehat{\otimes}_\pi^3 c_0$ is not isomorphic to $\widehat{\otimes}_\pi^2 c_0$. In the present paper we consider  two natural problems that arise from this result. The first  problem  is whether this result extends to  $C(K)$ spaces other than $c_0$, here  the space $C(K)$ will stand for the Banach space of all continuous, real-valued  functions on the compact Hausdorff space K and equipped with the  supremum norm. The first problem can be precisely stated as:
\begin{problem} \label{P1}Let $K$ be an infinite compact Hausdorff space. Is it true that $\widehat{\otimes}_\pi^3 C(K)$ is not isomorphic to $\widehat{\otimes}_\pi^2 C(K)$? 
\end{problem}
The second problem  is to know if the dual spaces of  $\widehat{\otimes}_\pi^3 c_0$      and  $\widehat{\otimes}_\pi^2 c_0$ are  isomorphic to each other. By using well-known properties of projective and injective tensor products \cite{Ry} this problem can be rewritten as follows:
\begin{problem} \label {P3} Is $\widehat{\otimes}_\ee^3 \ell_1$  isomorphic to    $\widehat{\otimes}_\ee^2 \ell_1$?  
\end{problem}
This last problem was proposed to us by Richard M. Aron to whom we are grateful for the interest shown in this research topic.

The  main goal of this paper is to present a negative  solution to Problem \ref{P3}. This follows directly the following  theorem.
\begin{theorem} \label{main} $\widehat{\otimes}_\ee^3 \ell_1$  is not isomorphic to any subspace of   $\widehat{\otimes}_\ee^2 \ell_1$.  

\end{theorem}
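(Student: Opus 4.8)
My plan is to argue by contradiction and to squeeze out of a hypothetical embedding a uniform finite-dimensional obstruction governed by the \emph{optimal exponents} in the multilinear Littlewood/Bohnenblust--Hille inequalities, which genuinely differ in the bilinear and trilinear regimes. First I would fix the concrete model: writing an element of $\widehat{\otimes}_\ee^n \ell_1$ as an array $(a_{\mathbf i})_{\mathbf i\in\nn^n}$, the injective norm is
\[
\|a\|_{\ee}=\sup\Bigl\{\Bigl|\sum_{\mathbf i} a_{\mathbf i}\,\epsilon^{(1)}_{i_1}\cdots\epsilon^{(n)}_{i_n}\Bigr| : \epsilon^{(r)}\in\{-1,1\}^{\nn}\Bigr\},
\]
so $\widehat{\otimes}_\ee^n \ell_1$ is the space of approximable $n$-linear forms on $c_0$ with the supremum norm over the cube. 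I would then pass to the truncations $E_N^{(n)}:=\widehat{\otimes}_\ee^n \ell_1^N$, which are $1$-complemented in $\widehat{\otimes}_\ee^n \ell_1$ via the tensor product of the norm-one coordinate projections of $\ell_1$. Hence an isomorphic embedding $\widehat{\otimes}_\ee^3 \ell_1\hookrightarrow\widehat{\otimes}_\ee^2 \ell_1$ restricts to embeddings $T_N\colon E_N^{(3)}\to\widehat{\otimes}_\ee^2 \ell_1$ of distortion bounded by a single $\lambda$ independent of $N$, so it suffices to refute the existence of such a uniformly bounded family.

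The distinguishing data is a quantitative, subspace-stable version of the Bohnenblust--Hille exponent. On the coordinate level every $a\in E_N^{(2)}$ obeys Littlewood's inequality $\|a\|_{\ell_{4/3}}\le C\|a\|_{\ee}$, whereas on $E_N^{(3)}$ the trilinear inequality only yields the strictly larger optimal exponent $\tfrac{2\cdot 3}{3+1}=\tfrac32>\tfrac43$. I must stress that neither ordinary type nor cotype, nor the magnitude of a single Gaussian average, can see this gap: the canonical unit systems $(e_{\mathbf i})$ in $E_N^{(2)}$ and $E_N^{(3)}$ both have Rademacher averages that saturate cotype $2$ up to the factor $N^{1/2}$, and the relevant exponents straddle $2$. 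The correct invariant is therefore the full \emph{multi-parameter profile} of the canonical trilinear system, namely the joint behaviour of the partial Rademacher averages obtained by averaging over each of the one-, two-, and three-index groupings of $[N]^3$. My plan is to show that this three-parameter profile, which is pinned down by the trilinear cut norm, cannot be reproduced by any norm-bounded configuration inside $\widehat{\otimes}_\ee^2 \ell_1$, where every grouping is constrained by the sharper bilinear exponent $\tfrac43$.

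Concretely, I would select inside $E_N^{(3)}$ a Hadamard/character family whose injective norms and partial averages are exactly controlled, transport it through $T_N$ (which preserves all such averages up to $\lambda$), and then apply the bilinear Littlewood bound to the image \emph{across each grouping of the indices}. Combining the constraints forced simultaneously by all groupings against the trilinear profile of the source should produce, after optimising in $N$, an inequality $N^{\alpha}\le\lambda^{C}N^{\beta}$ with $\alpha>\beta$, arising precisely from the numerical gap $\tfrac43<\tfrac32$; this is impossible for large $N$. The main obstacle is the passage from Littlewood's inequality, a statement about the \emph{fixed} coordinate structure of $\widehat{\otimes}_\ee^2 \ell_1$, to an obstruction valid for an \emph{arbitrary} subspace and an arbitrary embedding. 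To bridge it I would pair a Gaussian averaging argument, which makes the quantities basis-free, with a stabilisation/Ramsey extraction recovering an almost-unconditional, almost-tensorial block structure inside $T_N(E_N^{(3)})$, to which Littlewood applies up to the distortion. Controlling these extraction errors uniformly in $N$, and verifying that the chosen trilinear family genuinely saturates the exponent $\tfrac32$ in every grouping while its image remains constrained by $\tfrac43$, is the technical heart of the proof.
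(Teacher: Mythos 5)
You have correctly identified the decisive obstacle yourself, but your plan does not bridge it, and the proposed bridge is the part that fails. Littlewood's inequality $\|a\|_{\ell_{4/3}}\leqslant C\|a\|_\ee$ and its trilinear Bohnenblust--Hille counterpart are statements about the \emph{coordinates} of a form with respect to the canonical tensor grid. An isomorphic embedding $T\colon \widehat{\otimes}_\ee^3\ell_1\to\widehat{\otimes}_\ee^2\ell_1$ transports only one kind of data: norms of linear combinations of finitely many vectors. It does not transport coefficients, supports, or index groupings, and the image $T(E_N^{(3)})$ of a finite-dimensional truncation is just a finite-dimensional subspace sitting in arbitrary position: no Ramsey or stabilisation argument can extract ``almost-tensorial'' structure from finitely many arbitrary vectors, because there is none to extract (consider, e.g., vectors of the form $e_k\otimes g_k$ with $g_k$ spread over the grid: no passage to subfamilies aligns them with the coordinate structure). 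So the step where you ``apply the bilinear Littlewood bound to the image across each grouping of the indices'' has no meaning for the image vectors, and this is precisely where the proof would have to live. What you would actually need is a target-side theorem of the form: \emph{every} suitably separated finite (or infinite) configuration in $\widehat{\otimes}_\ee^2\ell_1$ admits a sign-combination of norm $\gtrsim C n^{1/2}$ --- a cotype-$2$--type lower bound valid for arbitrary vectors, not just coordinate-aligned ones. Your proposal explicitly disavows cotype-like invariants, on the grounds that the canonical unit systems behave the same in both spaces; that observation is correct but beside the point, since the witnessing family must be non-canonical.

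This is exactly where the paper goes a different way, and why its mechanism works. It defines a manifestly subspace-monotone quantity $\delta_n(X)$ using bounded, $C$-separated $n$-\emph{arrays} $(x^k_i)_{i,k}$ and sums of interlaced differences $\sum_{k=1}^n(x^k_{i_k}-x^k_{j_k})$. Two features do the work that your truncation scheme cannot. First, on the target side (Lemma \ref{l1}), each row of an array has infinitely many terms, so one may pass to subsequences making all coordinate functionals converge; then far-apart differences $x^k_{i_k}-x^k_{j_k}$ are small perturbations of \emph{block} vectors with respect to the Dilworth--Kutzarova blocking $F_k=\mathrm{span}\{e_i\otimes e_j:\max\{i,j\}=k\}$, whose lower $\ell_2$ estimate then applies. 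This extraction-plus-differencing is the bridge from a coordinate-dependent estimate to a statement about arbitrary configurations; it is unavailable for a fixed finite family such as your transported Hadamard system. Second, on the source side (Lemma \ref{l2}), the witnessing family is not a Hadamard/character system but the CGS vectors $x^k_i=e_i\otimes t^n_k\otimes g^n_k$, whose separation $\geqslant\beta\log(n)$ comes from the Hilbert-matrix factor $t^n_k$ while all interlaced difference-sums have norm $\leqslant 2\tau n^{1/2}$; a Hadamard-type family cannot play this role, since its individual norms are too small relative to the Bohnenblust--Hille lower bound on its sign-combinations, so even granting your bridge the contradiction would not close in the form you describe (and the true gap is logarithmic, not the power gap $N^\alpha$ versus $N^\beta$ you predict). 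In short: your instinct that the bilinear/trilinear gap is the right source of the obstruction agrees with the paper, but the missing ingredient --- a basis-free, subspace-stable formulation of the target-side constraint, achieved in the paper through separated arrays and the differencing trick --- is the actual content of the proof, and your proposal does not supply it.
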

Observe that if $K$ is an infinite countable compact metric space, then it is well known that the dual space  of $C (K)$ is isomorphic to $\ell_1$ \cite[p.20]{JL}. Therefore if follows from Theorem \ref{main} that $\widehat{\otimes}_{\pi}^3 C(K)$   is not isomorphic to any quotient of  $\widehat{\otimes}_{\pi}^2 C(K)$. In particular,   Problem \ref{P1} has a positive solution when $K$ is an infinite countable compact metric space.

Theorem \ref{main}  also provides a new proof that $\widehat{\otimes}_\ee^2 \ell_1$  is not isomorphic to any subspace of   $\ell_1$ \cite[Corollary 2.1]{KP}. However we do not know how to solve:
\begin{problem} \label{fff} Suppose that for some $ m,n\in \nn$ with $m,n\geqslant 3$,  $\widehat{\otimes}_\ee^m \ell_1$ is isomorphic to $\widehat{\otimes}_\ee^n \ell_1$. Is it true that $m=n$?
\end{problem}

Of course it would be interesting to know if Problem \ref{P1} also has a positive solution   when  $K$ is the interval of  real numbers $[0, 1]$ or $K$ is  $\beta \mathbb N$,   the Stone-Cech compactification of the discrete set of natural numbers $\mathbb N$, see \cite{CFPV} to some geometric properties of the spaces  $C([0,1]) \widehat{\otimes}_\pi C([0,1])$ and $C(\beta \mathbb N) \widehat{\otimes}_\pi C(\beta \mathbb N)$. 

\

The fundamental property  used in \cite{CGS} concerned $\ell_2$ upper estimates on the branches of weakly null trees in the $2$-fold tensor product $\widehat{\otimes}_\pi^2 c_0$. Trees dualize nicely, but the dual property to upper $\ell_2$ estimates on weakly null trees in some Banach space $X$ is lower $\ell_2$ estimates on the branches of $\hbox{weak}^{*}$ null trees in $X^*$. Therefore the result  from \cite{CGS} does not yield that there is no isomorphic embedding of $\widehat{\otimes}_\ee^3 \ell_1$ into $\widehat{\otimes}_\ee^2 \ell_1$, because such an isomorphic embedding need not be $\hbox{weak}^{*}\hbox{-weak}^{*}$ continuous. Thus Theorem \ref{main} is not a trivial consequence of the result of \cite{CGS}. 

Also, the results of  \cite{CGS} were stated in terms of weakly  null trees, but the objects produced were weakly null arrays, which can be viewed as a special kind of weakly null tree. Since weakly null arrays are weakly null trees,  $\ell_2$ upper estimates on the branches of weakly null trees implies the same  estimates on the branches of weakly null arrays, but the converse need not hold \cite{AM}. Therefore the existence of weakly null arrays which do not satisfy a uniform $\ell_2$ upper  estimate is a stronger condition than the existence of weakly null trees. In the current work, we use the fact that \cite{CGS} produced arrays and not simply sequences, as this allows us to circumvent the difficulty that isomorphic embeddings need not be weak$^*$-weak$^*$ continuous.  The key step is noting that arrays are amendable to a certain differencing procedure, while the same differencing procedure cannot be applied to trees. This differencing is used here to overcome a difficulty not present in \cite{CGS}.

\section{Proof of Theorem \ref{main}}

For a Banach space $X$ and $n\in\nn$, a  family  $(x^k_i)_{i=1,k=1}^{\infty,n}$ of  $X$   is called an $n$-\emph{array}. For $C>0$, an $n$-array is said to be $C$-\emph{separated} provided that for any $1\leqslant k\leqslant n$ and any distinct $i,j\in \nn$, $\|x^k_i-x^k_j\|\geqslant C$.    

For a Banach space $X$ and $n\in\nn$, let $\delta_n(X)$ denote the infimum of $d>0$ such that for any $C>0$ and any bounded, $C$-separated  $n$-array $(x^k_i)_{i=1,k=1}^{\infty,n}$ in $ X,$  there exist $i_1<j_1<\ldots <i_n<j_n$ such that \[d\Bigl\|\sum_{k=1}^n (x^k_{i_k}-x^k_{j_k})\Bigr\|\geqslant Cn^{1/2}.\]  

Obviously if $X$ is isomorphic to a subspace of $Y$,  then $\sup_n \delta_n(X)/\delta_n(Y)<\infty$. More precisely, if $X,Z$ are isomorphic Banach spaces and $d_{BM}$ their Banach-Mazur distance, then $\delta_n(X)\leqslant d_{BM}\delta_n(Z)$ for all $n$, and if $Z$ is a closed subspace of $Y$, then $\delta_n(Z)\leqslant \delta_n(Y)$ for all $n\in\nn$.  Therefore we will prove Theorem \ref{main} by completing the next two lemmas. 

\begin{lemma} It holds that \[\sup_n\text{\ } \delta_n(\widehat{\otimes}_\ee^2 \ell_1)<\infty.\]

\label{l1}
\end{lemma}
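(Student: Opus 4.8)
The plan is to exploit the duality $\widehat{\otimes}_\ee^2 \ell_1 = (\widehat{\otimes}_\pi^2 c_0)^*$ together with the main structural result of \cite{CGS}. The $\ell_2$ \emph{upper} estimates on branches of weakly null arrays in the predual $\widehat{\otimes}_\pi^2 c_0$ dualize to $\ell_2$ \emph{lower} estimates on branches of weak$^*$ null arrays in $\widehat{\otimes}_\ee^2 \ell_1$, exactly as anticipated in the introduction. It therefore suffices to produce, inside any bounded, $C$-separated $n$-array, a weak$^*$ null array whose branches obey such a lower estimate with a universal constant $c$; evaluating that estimate with all coefficients equal to $1$ then gives $\|\sum_k(x^k_{i_k}-x^k_{j_k})\|\geq c\,(\sum_k\|x^k_{i_k}-x^k_{j_k}\|^2)^{1/2}\geq cC\sqrt n$, whence $\delta_n(\widehat{\otimes}_\ee^2\ell_1)\leq 1/c$ for every $n$.

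First I would reduce to the weak$^*$ null case by \emph{differencing within each level}. Given a bounded, $C$-separated $n$-array $(x^k_i)$, the predual $\widehat{\otimes}_\pi^2 c_0$ is separable, so bounded subsets of its dual are weak$^*$ metrizable; passing to a subsequence in $i$ I may assume $x^k_i\to u^k$ weak$^*$ for each fixed $k$. Then the difference vectors $z^k_{(i,j)}:=x^k_i-x^k_j$ tend to $0$ weak$^*$ as $i,j\to\infty$, while $C$-separation guarantees $\|z^k_{(i,j)}\|\geq C$. Because there are only finitely many levels and the indices may be taken arbitrarily large, I can interleave the chosen pairs to respect the required order $i_1<j_1<\cdots<i_n<j_n$. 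This differencing is precisely what the array structure permits and what is unavailable for a single tree.

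Next I would establish the lower $\ell_2$ estimate on a branch of the weak$^*$ null difference array. Normalizing $w^k:=z^k/\|z^k\|$ and using that the predual norms $\widehat{\otimes}_\ee^2\ell_1$, I would choose functionals $\varphi^k$ in the unit ball of $\widehat{\otimes}_\pi^2 c_0$ with $\langle\varphi^k,w^k\rangle\geq 1/2$. The weak$^*$ nullity of the array lets me select the branch indices and the functionals simultaneously, by a diagonal or gliding-hump argument, so that the off-diagonal pairings $\langle\varphi^k,z^j\rangle$ with $k\neq j$ are summably small and so that $(\varphi^k)$ forms a branch of a weakly null array in the predual. Applying the upper $\ell_2$ estimate of \cite{CGS} yields $\|\sum_k\varphi^k\|\leq K\sqrt n$, while the diagonal terms give $\langle\sum_k\varphi^k,\sum_j z^j\rangle\geq\tfrac14\sum_k\|z^k\|\geq\tfrac14 Cn$. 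Dividing, $\|\sum_j z^j\|\geq Cn/(4K\sqrt n)=C\sqrt n/(4K)$, so one may take $c=1/(4K)$.

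The main obstacle is the simultaneous selection in the previous paragraph: arranging that the detecting functionals $\varphi^k$ constitute a weakly null array—so that the \cite{CGS} upper estimate genuinely applies—while keeping the off-diagonal pairings negligible and preserving the increasing index pattern. This is where the passage from sequences to arrays is essential, and where the uniformity of the constant must be checked: the bound $4K$ must be independent of the array bound $M$, of $C$, and of $n$. The normalization by $\|z^k\|$ is exactly what prevents a factor $M/C$ from entering, so the delicacy lies in carrying out the index-and-functional selection cleanly enough that only the scale-invariant lower $\ell_2$ estimate survives.
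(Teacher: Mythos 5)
Your overall strategy---dualizing the upper $\ell_2$ estimates of \cite{CGS} for weakly null arrays in $\widehat{\otimes}_\pi^2 c_0$ into lower estimates for weak$^*$ null arrays in $\widehat{\otimes}_\ee^2 \ell_1$---is genuinely different from the paper's, and your first reduction (differencing within levels, using weak$^*$ metrizability of bounded sets in the dual of a separable space, to produce a weak$^*$ null array with $\|z^k\|\geq C$) is sound. The genuine gap is exactly the step you yourself flag as ``the main obstacle'' and then leave unresolved: the assertion that weak$^*$ nullity of the $z^k$'s ``lets me select the branch indices and the functionals simultaneously, by a diagonal or gliding-hump argument,'' so that the norming functionals $\varphi^k$ form a branch of a weakly null array in the predual. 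Weak$^*$ nullity of a sequence says nothing, by itself, about weak nullity of its norming functionals, and the asserted principle is false in general: in $\ell_\infty=\ell_1^*$ the sequence $z_i=\sum_{j>i}e_j^*$ is weak$^*$ null with $\|z_i\|=1$, yet no functionals $\varphi_i$ in the unit ball of $\ell_1$ with $\langle z_i,\varphi_i\rangle\geq 1/2$ can have a weakly null subsequence, by the Schur property. So some specific property of $\widehat{\otimes}_\pi^2 c_0$ must be invoked, and you never name one. Two would do the job: (i) since $(\widehat{\otimes}_\pi^2 c_0)^*=\widehat{\otimes}_\ee^2\ell_1$ is separable, $\widehat{\otimes}_\pi^2 c_0$ contains no copy of $\ell_1$, so by Rosenthal's theorem the norming functionals have weakly Cauchy subsequences, and one must difference \emph{the functionals as well}, pairing $z^k_m$ against $\frac{1}{2}(\varphi^k_m-\varphi^k_{m'})$ with $m\gg m'$ so that weak$^*$ nullity kills the cross term; or (ii) use the FDD of $\widehat{\otimes}_\pi^2 c_0$ given by the blocks $\mathrm{span}\{e_i\otimes e_j:\max(i,j)=k\}$, whose coordinate projections are uniformly bounded and whose dual blocks span a dense subspace of $\widehat{\otimes}_\ee^2\ell_1$, to push the norming functionals into tail blocks, where they are automatically weakly null. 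Either repair must also be meshed with the quantifier structure of the \cite{CGS} estimate (it produces \emph{some} good branch or sub-array, so the good sub-array must be extracted before, not after, the pairing selection) and with the interleaving $i_1<j_1<\cdots<i_n<j_n$.

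It is worth seeing how the paper sidesteps all of this: it never dualizes and never mentions \cite{CGS} in this lemma. It works entirely inside $\widehat{\otimes}_\ee^2\ell_1$, invoking the Dilworth--Kutzarova result \cite{DK} that the blocking $F_k=\mathrm{span}\{e_i\otimes e_j:\max(i,j)=k\}$ satisfies a lower $\ell_2$ estimate with constant $a$. After passing to coordinatewise convergent subsequences, the differences $x^k_{i_k}-x^k_{j_k}$, for suitably chosen $i_1<j_1<\cdots<i_n<j_n$, are a small perturbation of a block sequence with respect to $(F_j)$, so the lower estimate gives $a\|\sum_{k=1}^n(x^k_{i_k}-x^k_{j_k})\|\geq(\sum_{k=1}^n\|x^k_{i_k}-x^k_{j_k}\|^2)^{1/2}-\ee\geq Cn^{1/2}-\ee$, hence $\delta_n(\widehat{\otimes}_\ee^2\ell_1)\leq a$ for all $n$. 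Your plan, once repaired along the lines of (ii), would essentially reconstruct this same block structure in the predual and then pass back by duality; the missing selection argument is the entire technical content of the lemma, and as written your proposal does not contain it.
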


\begin{lemma} It holds that  \[\inf_n \text{\ }\frac{\delta_n(\widehat{\otimes}_\ee^3 \ell_1)}{\log(n)}>0.\] 

\label{l2}
\end{lemma}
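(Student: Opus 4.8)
The plan is to bound $\delta_n(\widehat{\otimes}_\ee^3\ell_1)$ from below by exhibiting, for each $n$, a single bounded, $C$-separated $n$-array $(x^k_i)_{i=1,k=1}^{\infty,n}$ (with $C$ a fixed positive constant) all of whose differenced branches are short, say
\[
\max_{i_1<j_1<\cdots<i_n<j_n}\Bigl\|\sum_{k=1}^n(x^k_{i_k}-x^k_{j_k})\Bigr\|_\ee\le\frac{C'\sqrt n}{\log n}
\]
for some constant $C'$. Indeed, for such an array the requirement that there exist indices with $d\,\|\sum_k(x^k_{i_k}-x^k_{j_k})\|_\ee\ge C\sqrt n$ forces $d\ge(C/C')\log n$, whence $\delta_n(\widehat{\otimes}_\ee^3\ell_1)\ge(C/C')\log n$ and the lemma follows. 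So the entire statement reduces to one array construction together with one norm estimate.

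\emph{Reduction to a trilinear oscillation sum.} Since $\ell_1^*=\ell_\infty$, the injective norm is computed by $\|z\|_\ee=\sup_{\alpha,\beta,\gamma\in B_{\ell_\infty}}|\langle z,\alpha\otimes\beta\otimes\gamma\rangle|$. Fixing $\alpha,\beta,\gamma$ and writing $f^k_i=\langle x^k_i,\alpha\otimes\beta\otimes\gamma\rangle$, for any admissible indices one has
\[
\Bigl|\sum_{k=1}^n(f^k_{i_k}-f^k_{j_k})\Bigr|\le\sum_{k=1}^n\bigl|f^k_{i_k}-f^k_{j_k}\bigr|\le\sum_{k=1}^n\osc_i f^k_i,
\]
where $\osc_i f^k_i=\sup_i f^k_i-\inf_i f^k_i$. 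Taking the supremum over $\alpha,\beta,\gamma$ yields, uniformly in the index choice,
\[
\max_{i_1<j_1<\cdots<i_n<j_n}\Bigl\|\sum_{k=1}^n(x^k_{i_k}-x^k_{j_k})\Bigr\|_\ee\le\sup_{\alpha,\beta,\gamma\in B_{\ell_\infty}}\sum_{k=1}^n\osc_i\langle x^k_i,\alpha\otimes\beta\otimes\gamma\rangle.
\]
It therefore suffices to produce a bounded, $C$-separated array whose trilinear oscillation sum on the right is at most $C'\sqrt n/\log n$.

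\emph{The array.} Here I would import the weakly null \emph{array} produced in \cite{CGS} rather than a mere sequence, and apply to it the within-row differencing alluded to in the introduction: replacing each vector by a normalized difference of \cite{CGS} vectors annihilates the part common to a whole row while the surviving part is spread across fresh coordinates in all three tensor slots. Concretely one writes the entries as normalized combinations $x^k_i=\sum_t c^{k,i}_t\,e_{a_t}\otimes e_{b_t}\otimes e_{c_t}$ of basis tensors whose supporting indices, across distinct rows, are governed by the near-orthogonal combinatorial pattern supplied by \cite{CGS}. One then checks that each row is $C$-separated and that the array is bounded; weak$^*$-nullity, though not needed for the estimate above, is automatic.

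\emph{The crux.} The heart of the matter is the uniform bound $\sup_{\alpha,\beta,\gamma}\sum_k\osc_i\langle x^k_i,\alpha\otimes\beta\otimes\gamma\rangle\le C'\sqrt n/\log n$, and the difficulty is a genuine tension: $C$-separation forces, for \emph{each} row $k$, \emph{some} functional $\alpha\otimes\beta\otimes\gamma$ with $\osc_i\langle x^k_i,\alpha\otimes\beta\otimes\gamma\rangle\ge C$, yet one must control the \emph{sum} of oscillations uniformly over \emph{all} functionals; were a single $\alpha\otimes\beta\otimes\gamma$ to witness large oscillation in a constant fraction of the rows, the sum would be of order $n$, not $\sqrt n/\log n$. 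This is exactly where the array structure (rather than a tree or sequence) is essential: the within-row differencing cancels the diagonal contributions that in the sequential case would already cost a full $\sqrt n$ with no logarithmic gain, reducing the estimate to bounding a $3$-linear form on $\ell_\infty^3$ through the spread of the supporting indices. The surviving saving is only logarithmic — this is the trilinear, Varopoulos-type phenomenon, the absence of a Grothendieck inequality for $3$-linear forms whose extremal gap grows like $\log n$ — and it is precisely this $\log n$ that propagates through the reduction to yield $\delta_n\gtrsim\log n$. I expect this simultaneous oscillation bound, taken over all dual sign patterns $\alpha,\beta,\gamma$ and all index choices at once, to be the main obstacle; the two preceding reductions are essentially formal.
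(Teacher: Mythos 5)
Your framework is sound: exhibiting one bounded, $C$-separated $n$-array all of whose differenced branches have norm at most $C'\sqrt{n}/\log n$ does force $\delta_n(\widehat{\otimes}_\ee^3\ell_1)\geqslant (C/C')\log n$, and this is (up to rescaling the whole array by $1/\log n$) exactly the logic of the paper, which instead takes separation $C=\beta\log n$ and branches of norm at most $2\tau\sqrt{n}$. The genuine gap is that you never actually produce the array and never prove your ``crux'' estimate; you explicitly defer it as ``the main obstacle.'' But that estimate \emph{is} the lemma --- everything before it is, as you say, formal --- so as written the proposal proves nothing. Moreover, the description you give of the intended array (``normalized differences of \cite{CGS} vectors,'' ``within-row differencing,'' a ``near-orthogonal combinatorial pattern'') does not match what is needed and suggests a misreading of the introduction: the differencing referred to there is the differencing $x^k_{i_k}-x^k_{j_k}$ built into the definition of $\delta_n$ itself (exploited in the proof of Lemma \ref{l1} to produce near-block sequences), not a step in constructing the array for Lemma \ref{l2}.

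The paper closes the gap with a very simple explicit array in which no differencing occurs: $x^k_i=e_i\otimes t^n_k\otimes g^n_k$, where $t^n_k$ is a Hilbert-matrix-type vector with $\|t^n_k\|_{\ell_1}\geqslant\beta\log n$, $g^n_k\in S_{\ell_1}$, and only the first tensor leg varies with $i$. Separation with $C=\beta\log n$ is then immediate from $1$-unconditionality ($\|x^k_i-x^k_j\|\geqslant\|x^k_i\|=\|t^n_k\|\,\|g^n_k\|$), every branch satisfies $\bigl\|\sum_{k=1}^n(x^k_{i_k}-x^k_{j_k})\bigr\|\leqslant 2\bigl\|\sum_{k=1}^n x^k_k\bigr\|$ by $1$-subsymmetry, and the diagonal estimate $\bigl\|\sum_{k=1}^n e_k\otimes t^n_k\otimes g^n_k\bigr\|_{\widehat{\otimes}_\ee^3\ell_1}\leqslant\tau n^{1/2}$ is quoted from \cite{CGS}; this last inequality is the one piece of hard analysis, and it is precisely what your proposal leaves unproven. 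Incidentally, your oscillation reduction is valid and would even succeed on this array: since only the first leg depends on $i$, one has $\osc_i\langle x^k_i,\alpha\otimes\beta\otimes\gamma\rangle=\bigl(\osc_i\alpha_i\bigr)\,\bigl|\beta(t^n_k)\gamma(g^n_k)\bigr|$, and absorbing signs into the first functional gives
\[
\sup_{\alpha,\beta,\gamma\in B_{\ell_\infty}}\sum_{k=1}^n\osc_i\langle x^k_i,\alpha\otimes\beta\otimes\gamma\rangle\leqslant 2\Bigl\|\sum_{k=1}^n e_k\otimes t^n_k\otimes g^n_k\Bigr\|_{\widehat{\otimes}_\ee^3\ell_1}\leqslant 2\tau n^{1/2},
\]
but this detour is unnecessary once one has the explicit array, and without the \cite{CGS} estimate it cannot be completed.
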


\begin{proof}[Proof of Lemma \ref{l1}] Let $(e_i)_i$ be the unit vector basis of $\ell_1, $ $(e_i^*)_i$ the biortho\-gonal sequence and, for every integer $k,$ $$F_k=\mathop{\rm span}\{ e_i\otimes e_j : \max\{i,j\}=k\}.$$
 It was proven in \cite{DK}  that the sequence of subspaces $(F_k)_k$   satisfies \color{black} a lower $\ell_2$ estimate. That is, there exist a constant $a>0$   such that \color{black} for any $0=q_0<q_1<\ldots$, any $n\in\nn$, and any $(y_i)_{i=1}^n\in \prod_{i=1}^n \text{span}\{F_j:q_{i-1}<j\leqslant q_i\}$,  \[a^2\Bigl\|\sum_{i=1}^n y_i\Bigr\|^2 \geqslant  \sum_{i=1}^n \|y_i\|^2.\]   

Fix $C>0$, $n\in\nn$, and a $C$-separated, bounded  $n$-array $(x^k_i)_{i=1,k=1}^{\infty,n}$ in $\widehat{\otimes}_\ee^2 \ell_1$.  By passing to subsequences $n$ times and relabeling, we may assume that for each $1\leqslant k\leqslant n$ and each  $(p,q) \in\mathbb{N}\times \mathbb{N}$, $\lim_i \langle e_p^*\otimes e^*_q, x^k_i\rangle$ exists. Then for $\ee>0$ and some appropriately chosen $i_1<j_1<\ldots <i_n<j_n$, $(x^k_{i_k}-x^k_{j_k})_{k=1}^n$ will be a small perturbation of a block sequence with respect to the blocking $(F_j)_{j=1}^\infty$ and will satisfy \[a\Bigl\|\sum_{k=1}^n (x^k_{i_k}-x^k_{j_k})\Bigr\| \geqslant \Bigl(\sum_{k=1}^n \|x^k_{i_k}-x^k_{j_k}\|^2\Bigr)^{1/2}-\ee \geqslant Cn^{1/2}-\ee.\] From this it follows that $\sup_n \delta_n(\widehat{\otimes}_\ee^2 \ell_1)\leqslant a$. 

\end{proof}

\begin{proof}[Proof of Lemma \ref{l2}]

For $1<n\in\nn$ and $1\leqslant k\leqslant n$, define \[t^n_k=\sum_{n+1-k\neq j=1}^n \frac{1}{n+1-j-k}e_j.\]  Define \[g^n_k=\frac{1}{2^n}\sum_{i=1}^{2^k}\sum_{j=(i-1)2^{n-k}+1}^{i2^{n-k}} (-1)^i e_j\in S_{\ell_1}.\]  Note that there exists a constant $0<\beta$ (independent of both $n$ and $k$) such that \[\beta\log(n) \leqslant \|t^n_k\|_{\ell_1}.\]  It was shown in \cite{CGS} that there exists a constant $\tau<\infty$ (independent of $n$) such that \[\Bigl\|\sum_{k=1}^n e_k\otimes t^n_k\otimes g^n_k\Bigr\|_{\widehat{\otimes}_\ee^3 \ell_1} \leqslant \tau n^{1/2}.\]  There the norm was computed with $T=\sum_{k=1}^n e_k\otimes t^n_k\otimes g^n_k$ treated as a member of $(\widehat{\otimes}_\pi^3 c_0)^*$, but this is equivalent to the norm in $\widehat{\otimes}_\ee^3 \ell_1$.

Define the array $(x^k_i)_{i=1,k=1}^{\infty,n}$ by letting $x^k_i=e_i\otimes t^n_k\otimes g^n_k$. By $1$-unconditionality of the $\ell_1$ basis, for any $1\leqslant k\leqslant n$ and any distinct $i,j\in\nn$, \[\|x^k_i-x^k_j\|\geqslant \|x^k_i\|=\|e_i\|\|t^n_k\|\|g^n_k\|\geqslant \beta \log(n).\] Therefore  the array $(x^k_i)_{i=1,k=1}^{\infty, n}$ is $C=\beta \log(n)$-separated.

By $1$-subsymmetry of the $\ell_1$ basis, it follows that for any $i_1<i_2<\ldots <i_n<j_n$, \begin{align*} \Bigl\|\sum_{k=1}^n x^k_{i_k}-x^k_{j_k}\Bigr\| & \leqslant \Bigl\|\sum_{k=1}^n x^k_{i_k}\Bigr\|+\Bigl\|\sum_{k=1}^n x^k_{j_k}\Bigr\| \\ & = 2\Bigl\|\sum_{k=1}^n x^k_k\Bigr\| =2\Bigl\|\sum_{k=1}^n e_k\otimes t^n_k\otimes g^n_k\Bigr\|\leqslant 2\tau n^{1/2}.\end{align*}

Therefore $d 2\tau \geqslant \beta \log(n)n^{1/2}.$

From this it follows that $\delta_n (\widehat{\otimes}_\ee^3 \ell_1) \geqslant \frac{\beta \log(n)}{2\tau}$.  Since neither $\beta$ nor $\tau $ depends on $n$, we are done.

\end{proof}


\begin{thebibliography}{Gul0}

\normalsize
\baselineskip=17pt


\bibitem{AM} S. Argyros, P. Motakis, \emph{On the complete separation of asymptotic structures in Banach spaces}, Adv. Math. 362 (2020), https://doi.org/10.1016/j.aim.2019.106962.  

\bibitem{CFPV} F. S. Cabello, D.  P\'erez-Garc\'ia, I.  Villanueva, 
\emph {Unexpected subspaces of tensor products,}
J. London Math. Soc. (2) 74 (2006),  2, 512-526.

\bibitem{CGS} R. M. Causey, E. M. Galego, C. Samuel \emph{Solution to a problem of Diestel.} arXiv:2003.09878. Proc. Amer. Math. Soc. To appear.

 \bibitem{DK}  S. J. Dilworth, D. Kutzarova, \emph{ Kadec-Klee properties for $L(\ell_p, \ell_q)$,} Function spaces (Edwardsville, IL, 1994), 71-83, Lecture Notes in Pure and Appl. Math., 172, Dekker, New York, 1995

 \bibitem{JL} W. B. Johnson, J.  Lindenstrauss, \emph{Basic concepts in the geometry of Banach spaces}. Handbook of the geometry of Banach spaces, Vol. I, 1-84, North-Holland, Amsterdam, 2001.


\bibitem{KP} S. Kwapien and A. Pe\l czy\'nski, \emph{The main triangle projection in matrix spaces and its
applications,} Studia Math. 34 (1970) 43-68. 


\bibitem{Ry} R. A. Ryan, \emph{Introduction to Tensor Products of Banach Spaces}, Springer-Verlag, London, (2002).


\end{thebibliography}
\end{document}